\theoremstyle{plain}
\newtheorem{theorem}{Theorem}[section]
\newtheorem{lemma}[theorem]{Lemma}
\theoremstyle{definition}
\newtheorem{definition}[theorem]{Definition}
\theoremstyle{remark}
\newtheoremstyle{named}{}{}{\itshape}{}{\bfseries}{.}{.5em}{\thmnote{#3}#1}
\theoremstyle{named}
\title{Chainmail links and non-left-orderability}
\author{Zipei Nie \thanks{Lagrange Mathematics and Computing Research Center, Huawei. Email: niezipei@huawei.com.}}
\date{\today}
\begin{document} 
\maketitle
\begin{abstract}
   We prove that the alternating surgeries on flat fully augmented chainmail links yield total L-spaces. We also study the non-left-orderability of surgeries on the connected sum with an L-space knot using order detection.
\end{abstract}
\section{Introduction}
\subsection{Backgrounds on L-space conjecture}
An L-space is a rational homology $3$-sphere $Y$ with equality $\rank \widehat{{HF}}(Y) = |H_1(Y)|$. Here, $\widehat{{HF}}(Y)$ represents the hat version of Heegaard Floer homology, and $H_1(Y)$ is the first homology group of $Y$ with coefficients in $\mathbb{Z}$. A group $G$ is called left-orderable if it is nontrivial and admits a total order $\leq$ that remains invariant under left multiplication. In other words, for all $f,g,h \in G$, if $g \leq h$, then $fg \leq fh$. An L-space is called a total L-space if its fundamental group is not left-orderable. One aspect of the L-space conjecture \cite{boyer2013spaces} predicts that every L-space is a total L-space.

Several strategies addressing the conjecture exist in the literature. First, since the definition of L-space is connected to the Heegaard splitting, we can directly study the fundamental group derived from the Heegaard splitting. Pursuing this approach, Levine and Lewallen \cite{levine2013strong} introduced a specific type of Heegaard diagrams known as strong Heegaard diagrams, which yield total L-spaces. It is known \cite{greene2013spanning} that the double branched cover of a non-split alternating link is a strong L-space. Partial evidence \cite{greene2016strong} supports the possibility of establishing the converse statement.

Alternatively, since L-spaces cannot admit a coorientable taut foliation, one may seek to establish a taut foliation based on a left order within the fundamental group. Note that this direction also addresses an additional conjectural implication in the L-space conjecture. Recently, Li \cite{li2022taut} achieved this for irreducible $3$-manifolds with Heegaard genus two.

One may also speculate that an unknown topological characterization \cite[Question 11]{ozsvath2004heegaard} for L-spaces is necessary to provide crucial information for demonstrating the non-left-orderability. Such an extra topological property may also shed light on the Heegaard Floer Poincaré conjecture, which asserts that \cite[p. 39]{ozsvath2004lectures} an irreducible integer homology sphere is an L-space if and only if it is homeomorphic to $S^3$ or the Poincaré homology sphere. We illustrate this approach in Figure \ref{fig:framework}.

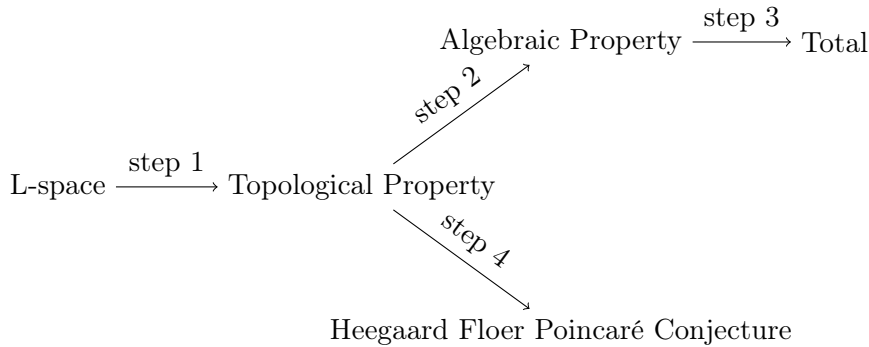
\begin{figure}
    \centering
    \begin{tikzpicture}[node distance=3cm, auto]
        \node (Lspace) {L-space};
        \node (TopProperty) [right of=Lspace , xshift=1cm] {Topological Property};
        \node (AlgProperty) [above right of=TopProperty,xshift=0.5cm,yshift=-0.2cm] {Algebraic Property};
        \node (NLO) [right of=AlgProperty,xshift=0.6cm] {Total};
        \node (HFPC) [below right of=TopProperty,xshift=0.5cm,yshift=0.2cm] {Heegaard Floer Poincaré Conjecture};
        
        \draw[->] (Lspace) -- node {step 1} (TopProperty);
        \draw[->] (TopProperty) --node[sloped] {step 2} (AlgProperty);
        \draw[->] (TopProperty) --node[sloped] {step 4} (HFPC);
        \draw[->] (AlgProperty) --node {step 3} (NLO);
    \end{tikzpicture}
    \caption{The Third Approach Framework}
    \label{fig:framework}
\end{figure}
For instance, the L-space conjecture was verified for all graph manifolds \cite{boyer2017foliations, hanselman2020spaces}; see also \cite{rasmussen2017space}. The topological structure of L-space graph manifolds was implicitly used in Boyer and Clay's work \cite{boyer2017foliations}, and it was explicitly expressed in \cite[Theorem 1.6]{rasmussen2017space}. Both \cite[Theorem 7]{hanselman2020spaces} and \cite[Theorem 1.6]{rasmussen2017space} offered the necessary gluing result for detecting L-spaces in graph manifolds. Hanselman, Rasmussen and Watson \cite[Theorem 1.14]{hanselman2016bordered} developed the theory of immersed curves to establish a general gluing result for arbitrary splicing along an incompressible torus boundary. This theory also provided a new proof of Eftekhary’s theorem \cite{eftekhary2018bordered} (see also \cite{hedden2016splicing}) that L-space homology spheres are atoroidal. In particular, every counterexample of the Heegaard Floer Poincaré conjecture is hyperbolic.

As to the non-left-orderability for L-space graph manifolds, Boyer and Clay considered two extra algebraic properties of a left order on the fundamental groups of $3$-manifolds with multiple torus boundary components. First, they introduced a notion of order detection (see \cite[Definition 4.2]{boyer2017foliations}), which could be viewed as a mixture of two boundary restrictions, namely (regular) detection and strong detection. The former one corresponds to the splicing operation along a torus boundary component, while the latter one corresponds to the Dehn filling. Second, they considered whether the fiber class is cofinal with respect to a given left order. If an element is cofinal with respect to a left order, then the conjugates of this element have the same sign (see \cite[Lemma 4.6]{boyer2017foliations}).

Establishing non-left-orderability via cofinal elements is highly effective for surgeries on L-space knots (or links with a small number of components), because the cofinality of a boundary element may be determined without specifying the surgery slope (see \cite[Conjecture 1.8]{boyer2022order}). This approach is also called fixed point method because it involves tracking the fixed points in the dynamical realization. Examples of this approach include proving non-left-orderability of the fundamental groups of $5$-fold cyclic branched covers of genus one two-bridge knots \cite{ba2019spaces} and L-spaces obtained from surgeries on $(1,1)$-knots \cite{nie2021explicit}. 

When considering L-spaces obtained from surgeries on L-space links with an unbounded number of components, the contribution of cofinal elements becomes marginal. Discovering more algebraic properties becomes necessary to prove non-left-orderability of their fundamental groups. Examples in this category include the double branched cover of non-split alternating links \cite{boyer2013spaces, greene2018alternating, ito2013non}, the $n$-th cyclic branched cover of certain two-bridge knots \cite{dkabkowski2005non}, and the $n$-th cyclic branched cover of the pretzel knot $P(3,-3,-2k-1)$ \cite{lin2022non}. 
\subsection{Main results}
Polyak \cite{polyak20143} introduced the terminology of chainmail links. It was shown \cite{matveev1994geometrical, agol2023chainmail} that every $3$-manifold could be obtained from a surgery on a flat fully augmented chainmail link, and that flat fully augmented chainmail links are L-space links. Igor conjectured on \cite[page 16]{agol2023chainmail} that the L-space surgeries described in \cite[Theorem 4.2]{agol2023chainmail} on alternating and augmented chainmail links have non-left-orderable fundamental groups. This paper aims to provide a proof for this conjecture.

\begin{theorem}\label{main-1}
    The fundamental group of a $3$-manifold obtained from alternating surgeries on flat fully augmented chainmail links is not left-orderable. 
\end{theorem}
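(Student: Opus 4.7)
The plan is to proceed by contradiction, assuming that the fundamental group $\pi$ of the surgered manifold admits a left-invariant total order $\leq$, and then deriving a contradiction from the combinatorial structure of the chainmail link together with the alternating surgery slopes.

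I would begin by writing a presentation of $\pi$ directly from the flat fully augmented chainmail link diagram. Generators are meridians of the components --- both the planar components and the augmenting crossing circles --- and the relations come in three flavors: Wirtinger-type relations at each crossing of the planar projection, longitude-in-meridians relations recording the linking pattern of the chainmail, and Dehn filling relations imposed by the alternating surgery slopes on each component. This is the natural setup because of the result cited from \cite{matveev1994geometrical, agol2023chainmail} that every closed $3$-manifold arises this way, so one has a lot of combinatorial control over the presentation.

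Next, I would encode the hypothetical left order by the sign function $\sigma\colon \pi \setminus \{1\} \to \{+,-\}$ and chase sign constraints through the presentation. At each augmenting circle the surgery slope interacts with the local Wirtinger data to yield a relation in which a product of two meridians with specified signs is forced to be trivial, which in turn restricts the signs of the neighbouring meridians. The alternating condition should force these local restrictions to propagate coherently around the chainmail pattern until they close up on some global cycle with an inconsistent total sign. The prototype is the sign-chasing argument used by Boyer--Gordon--Watson, Ito, and Greene for double branched covers of non-split alternating links; since such covers are the special case of the theorem coming from clasp-chain chainmail surgeries, the argument here must specialise to theirs in that situation, which gives a useful sanity check.

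The principal obstacle, in my view, is the global combinatorial bookkeeping once augmenting circles are present. The augmenting circles introduce extra meridian generators whose signs must be coordinated with those of the flat components, and one has to identify the correct cycle in the diagram --- most likely a face of the underlying planar graph, or a spanning object in the spirit of Greene's spanning tree argument --- whose associated sign computation produces the contradiction. A further subtlety is that \emph{alternating} here refers to the pattern of surgery slopes rather than the pattern of over/under crossings, so the classical alternating argument does not transfer verbatim and must be adapted to the surgery-slope combinatorics on a flat chainmail link. An alternative route, which I would keep in reserve, is to construct a strong Heegaard diagram in the sense of Levine--Lewallen from the planar projection and invoke their theorem directly, but verifying the strong condition would likely require essentially the same combinatorial input as the direct sign-chasing argument.
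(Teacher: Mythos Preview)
Your overall plan---take a Wirtinger-style presentation, impose the surgery relations, and chase sign constraints in the spirit of Greene's argument for double branched covers of alternating links---points in the right direction, but it is missing the technical mechanism that actually makes the induction close in the general case. Greene-type sign-chasing has already been pushed through (by Agol) when at most one vertex loop carries a nonzero surgery weight; the genuinely new content of the theorem is the case of arbitrarily many nonzero vertex weights, and there a bare sign function on meridians is too coarse an invariant to propagate.

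The paper's proof does not look for a single cycle whose signs close up inconsistently. It first reformulates the problem on the link group via left total preorders and order detection, so that the surgery relations become slope-detection hypotheses rather than extra relators. It then orients each edge $e$ of the chainmail graph according to the sign of the longitude $\lambda_e$, obtaining a directed graph $\tilde G$, and uses the cycle relations only to show that $\tilde G$ is acyclic. The real work is an induction along a topological ordering of $\tilde G$, in which the inductive hypothesis is not ``$\mu_e$ is positive'' but the existentially quantified statements
\[
\textbf{S1}:\ \exists\, n_i^+\ge 0,\ \mu_{e_i}^{\varepsilon_i}\ge \mu_{v,e_i}^{n_i^+},\qquad
\textbf{S2}:\ \exists\, n_i^-<0,\ \mu_{e_i}^{\varepsilon_i}\le \mu_{v,e_i}^{n_i^-},\qquad
\textbf{S3}:\ \mu_{v,e_i}\le 1.
\]
Two lemmas show that \textbf{S2},\textbf{S3} at all $\tilde G$-predecessors of $v$ give \textbf{S1} at $v$, and that \textbf{S1} at $v$ gives \textbf{S2},\textbf{S3} at $v$ (or $v$ is isolated in $\tilde G$). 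Running the same argument for the reversed preorder forces every generator into the residue group, which is the contradiction. The paper remarks explicitly that the quantifiers in \textbf{S1}--\textbf{S3} are essential and that without them the reasoning blows up exponentially; this is precisely the ``localized cofinal element'' idea imported from \cite{lin2022non}.

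So the concrete gap in your proposal is the inductive invariant. Your description ``a product of two meridians with specified signs is forced to be trivial'' and ``close up on some global cycle with an inconsistent total sign'' describes Greene's mechanism, which does not survive passage across a vertex loop with a genuinely positive surgery slope: at such a vertex the longitude is a product of $k$ meridians and the positive-slope detection only bounds it against a power of the vertex meridian, not against $1$. You need to say what replaces the sign function in that situation, and the answer is the comparison of $\mu_e^{\pm1}$ against powers $\mu_{v,e}^n$ with an unspecified integer $n$---that is the missing idea.
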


The $3$-manifolds discussed in Theorem~\ref{main-1} are more general than those presented in \cite[Theorem 4.2]{agol2023chainmail} because we allow rational surgery slopes. Nevertheless, by making minor adjustments to the proof of \cite[Theorem 4.2]{agol2023chainmail}, one can demonstrate that the $3$-manifolds described in Theorem~\ref{main-1} are L-spaces.

When all the vertex weights of the chainmail graph are zero, the resulting $3$-manifold is homeomorphic to the connected sum of the double branched cover of an alternating link and $S^2\times S^1$, as shown in \cite[Subsection 2.2]{agol2023chainmail}, and the non-left-orderability follows from \cite{boyer2013spaces, greene2018alternating, ito2013non}. In cases where all but one vertex weight is zero, Agol (see \cite[Theorem 5.1]{agol2023chainmail}) proved the non-left-orderability by making minor modifications to Greene's proof \cite{greene2018alternating} regarding the non-left-orderability of fundamental groups of double branched covers of alternating links. Our proof of Theorem~\ref{main-1} also shares some similarities with Greene's proof.

Another technique we use was previously demonstrated in \cite{lin2022non}. The proof of \cite[Theorem 1.2]{lin2022non} relied on a localized version of the cofinal elements method. Specifically, \cite[Lemma 3.1]{lin2022non} established that, for any left total preorder $\le$, if $\{b_i\}$ is cofinal in $\{b_i\}\cup\{a_{i+1}^m:m\in\mathbb{Z}\}$, then $\{a_{i+1}^m b_i: m\in \mathbb{Z}\}$ is cofinal in $\{a_{i+1}^m b_i: m\in \mathbb{Z}\}\cup \{a_{i}^m: m\in \mathbb{Z}\}$. Furthermore, \cite[Lemma 3.2]{lin2022non} established that, if $\{b_i\}$ is cofinal in $\{b_i\}\cup\{a_{i+1}^m:m\in\mathbb{Z}\}$, then $\{b_{i-1}\}$ is cofinal in $\{b_{i-1}\}\cup\{a_{i}^m:m\in\mathbb{Z}\}$. This pattern was followed throughout the rest of the arguments. Our proof of Theorem~\ref{main-1} also relies on such propositions to complete the inductive step. The quantifiers in these propositions are essential. Without them, the complexity of the reasoning would grow exponentially.

In the remark after the proof of \cite[Theorem 3.2]{agol2023chainmail}, it was noted that the combination of any component from a negative alternating chainmail link with an L-space knot results in another L-space link. The L-spaces derived from surgeries on the newly formed L-space link can be obtained by splicing two Floer simple manifolds together. One can demonstrate the non-left-orderability of the fundamental groups of such L-spaces by \cite[Theorem 1.6]{boyer2022order}, \cite[Theorem 1.7]{boyer2022order} and order detection results for these Floer simple manifolds.

In general, the machinery of order detection allows us to generate much more total L-spaces through splicing and Dehn filling. We present gluing theorems for these topological operations, serving as the non-orderable counterpart to \cite[Theorem 7.6]{boyer2021slope} and \cite[Theorem 7.10]{boyer2022order}.

The paper is organized as follows.

In Section~\ref{sec:2}, we introduce a slight modification to the definition of order detection and provide gluing theorems.

In Section~\ref{sec:3}, we provide the definition of the flat fully augmented chainmail link and a presentation of its link group.

In Section~\ref{sec:4}, we provide the proof of an order detection result for the link complement of a flat fully augmented chainmail link.

\section*{Acknowledgements} The author would like to thank Ian Agol and Qiuyu Ren for helpful discussions.

\section{Order detection}\label{sec:2}

We introduce a few modifications to the concept of order detection introduced in \cite{boyer2017foliations} and \cite{boyer2022order}. First, we replace left orders with (proper) left total preorders. A left total preorder on a group $G$ is a proper total order on $G$ that remains invariant under left multiplication. The primary advantage of abandoning the antisymmetry axiom is that it allows us to separate the L-space detection problem from determining the irreducibility of the $3$-manifold, as discussed in the remark following the proof of \cite[Proposition 16]{lisitsa2023automated}. It is conjectured that a closed, connected $3$-manifold is an L-space if and only if its fundamental group has no left-orderable quotient, or equivalently by \cite[Proposition 10]{lisitsa2023automated} and \cite[Proposition 16]{lisitsa2023automated}, admits no left total preorder.

Additionally, we do not make the advance assumption that the boundary tori are incompressible. While these topological assumptions can be obtained at no cost through the prime and JSJ decompositions, we are not sure if they are the best way to describe the topological structure of L-spaces in the framework illustrated in Figure~\ref{fig:framework}. Therefore, we impose minimal topological properties to enhance generality.

We define order detection directly on the groups.

\begin{definition}
    Let $\le$ be a left total preorder on a group $G$. Let $\mu,\lambda$ be commuting group elements. We say $[x:y]$ on the real projective line $\mathbb{RP}^1$ is weakly $\le$-detected with respect to $(\mu,\lambda)$, if and only if $\mu^{p_2} \lambda^{q_2}\ge 1$ for all integers $p_1,q_1,p_2, q_2$ with $\mu^{p_1} \lambda^{q_1}\ge 1$ and $(p_1y - q_1 x)(p_2 y - q_2 x)>0$.
\end{definition}

Let $\varphi:\mathbb{Z}^2\to G$ be the homomorphism defined by $\varphi(p,q)=\mu^p\lambda^q$. If the image of $\varphi$ is in the residue group of $\le$, then every slope is weakly $\le$-detected. Otherwise, the pullback of the left total preorder $\le$ by $\varphi$ is a left total preorder on $\mathbb{Z}^2$. Just as the left orders on $\mathbb{Z}^2$ discussed in \cite[Subsection 2.2]{boyer2022order}, for every left total preorder on $\mathbb{Z}^2$, there exists a unique line in $\mathbb{R}^2$ such that the elements of $\mathbb{Z}^2$ that lie to one side of it are positive and the elements lying to the other side are negative. The slope of this line is the unique slope which is weakly $\le$-detected.

\begin{definition}
    Let $\le$ be a left total preorder on a group $G$. Let $\mu,\lambda$ be commuting group elements. We say $[p:q]$ ($p$ and $q$ are relatively prime integers) on the rational projective line $\mathbb{QP}^1$ is strongly $\le$-detected with respect to $(\mu,\lambda)$, if and only if every conjugate of $\mu^{p} \lambda^{q}$ is in the residue group of $\le$. 
\end{definition}

Because the strong order detection corresponds to Dehn filling in topology, we do not need to define the strong order detection for irrational slopes for our purpose. In fact, we have the following gluing theorem for strong order detection.

\begin{theorem}\label{thm:strong}
    Let $(\mu_i,\lambda_i)$ $(i\in I\cup J)$ be commuting pairs in a group $G$. Let $j_0$ be an index in $J$. Let $N$ denote the normal subgroup generated by $\mu_{j_0}^p \lambda_{j_0}^q$ ($p$ and $q$ are relatively prime integers). Let $s_i\in\mathbb{RP}^1$ $(i\in I)$ and $s_j\in\mathbb{QP}^1$ $(j\in J)$ be slopes with $s_{j_0}=[p:q]$. Then the following statements are equivalent:
    \begin{enumerate}[label=(\alph*)]
        \item there exists a left total preorder $\le_0$ on $G$ such that the slope $s_i$ is weakly $\le_0$-detected with respect to $(\mu_i,\lambda_i)$ for every $i\in I$, and the slope $s_j$ is strongly $\le_0$-detected with respect to $(\mu_j,\lambda_j)$ for every $j\in J$;
        \item there exists a left total preorder $\le$ on $G/N$ such that the slope $s_i$ is weakly $\le$-detected with respect to $(\mu_i N,\lambda_i N)$ for every $i\in I$, and the slope $s_j$ is strongly $\le$-detected with respect to $(\mu_j N,\lambda_j N)$ for every $j\in J\setminus\{j_0\}$.
    \end{enumerate}    
\end{theorem}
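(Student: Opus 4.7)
The plan is to treat this as a pullback--pushforward correspondence for left total preorders along the quotient map $\pi : G \to G/N$. The conceptual content of the theorem is that strong $\le_0$-detection of $s_{j_0} = [p:q]$ with respect to $(\mu_{j_0},\lambda_{j_0})$ is exactly the statement that $\le_0$ descends through $\pi$, while the other detection conditions are phrased through inequalities and containments that transport cleanly along $\pi$.

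For (a)$\Rightarrow$(b), let $\le_0$ satisfy (a) and write $H_0 = \{g \in G : g \sim_0 1\}$ for its residue group; under the section's conventions (equivalently, Lisitsa's Propositions 10 and 16), $H_0$ is a normal subgroup of $G$. Strong $\le_0$-detection at $j_0$ says every conjugate of $\mu_{j_0}^p \lambda_{j_0}^q$ lies in $H_0$, whence the normal closure satisfies $N \subseteq H_0$. Therefore $gN \le g'N \iff g \le_0 g'$ is a well-defined proper left total preorder on $G/N$ with residue group $H_0/N$. Strong detection transfers at each $j \in J\setminus\{j_0\}$ because every conjugate of $\mu_j^{p_j}\lambda_j^{q_j}$ in $H_0$ projects into $H_0/N$, and weak detection transfers at each $i \in I$ because $\mu_i^a \lambda_i^b \ge_0 1$ holds in $G$ if and only if $(\mu_i N)^a (\lambda_i N)^b \ge N$ holds in $G/N$.

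For (b)$\Rightarrow$(a) I would pull back: set $g \le_0 g' \iff gN \le g'N$. Left invariance, totality, and properness of $\le_0$ are immediate, and $N$ lies in the residue group of $\le_0$ by construction, so normality of $N$ gives strong $\le_0$-detection of $[p:q]$ at $j_0$ for free. The remaining detection conditions at $j \in J\setminus\{j_0\}$ and $i \in I$ lift by reversing the translations in the previous paragraph. I expect the main obstacle not to be either direction but the underlying structural framework: namely, verifying that the residue group of a proper left total preorder is automatically normal and that properness is preserved on both sides of $\pi$. These are precisely the statements behind Lisitsa's Propositions 10 and 16; granted them, both halves of the theorem reduce to unwinding the definitions.
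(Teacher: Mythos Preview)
Your proposal is correct and follows the same route as the paper: pull back $\le$ along $\pi$ for (b)$\Rightarrow$(a), and push $\le_0$ forward through $\pi$ for (a)$\Rightarrow$(b) after observing that $N$ sits inside the residue group of $\le_0$.

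One small remark: the ``main obstacle'' you flag---normality of the residue group $H_0$---is not actually needed here. What you require is only $N\subseteq H_0$, and since $N$ is by construction the normal closure of $\mu_{j_0}^p\lambda_{j_0}^q$, it suffices that $H_0$ be closed under products and inverses (which is elementary from left invariance and transitivity) together with strong detection placing every conjugate of $\mu_{j_0}^p\lambda_{j_0}^q$ into $H_0$. The paper's proof accordingly does not invoke Propositions~10 and~16 of Lisitsa at this point; those references appear elsewhere for a different purpose. Properness on both sides is likewise immediate from the relation $x\le_0 y \iff xN\le yN$, so there is no genuine obstacle in either direction.
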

\begin{proof}
    Suppose that statement (b) holds. Then by definition, the pullback $\le_0$ of the left total preorder $\le$ on $G/N$ by the quotient map $G \to G/N$ is a left total preorder that strongly detects the slope $[p:q]$ with respect to $(\mu_{j_0},\lambda_{j_0})$. Because $x\le_0 y$ if and only if $xN\le y N$, the left total preorder $\le_0$ satisfies the condition in statement (a).

    Conversely, suppose that statement (a) holds. Then the normal subgroup $N$ is contained in the residue group of $\le_0$. Thus $\le_0$ naturally induces a left total preorder $\le$ on $G/N$. By definition, $x\le_0 y$ if and only if $xN\le y N$, so the left total preorder $\le$ satisfies the condition in statement (b).
\end{proof}

It follows from the definitions that, if a slope is strongly $\le$-detected, then it is weakly $\le$-detected with respect to every conjugate of the meridian-longitude pair. 

We also need to define a conjugacy invariant version of weak order detection. However, the regular order detection \cite{boyer2017foliations} which worked for Seifert fibered pieces may not be suitable for general $3$-manifolds with multiple torus boundary components. Instead, we consider weak order detection results with respect to every conjugate of a meridian-longitude pair. 

\begin{definition}\label{def:mix}
        Let $\le$ be a left total preorder on a group $G$. Let $(\mu_i,\lambda_i)$ $(i=0,1,\ldots,k)$ be commuting pairs. Let $D_i$ $(i=0,1,\ldots,k)$ be subsets of $\mathbb{RP}^1$. We say $(D_0;D_1,\ldots, D_k)$ is hybridly $\le$-detected with respect to $((\mu_0,\lambda_0);(\mu_1,\lambda_1), \ldots,(\mu_k,\lambda_k))$, if and only if
        \begin{enumerate}[label=(\alph*)]
            \item with respect to any conjugate of $(\mu_i,\lambda_i)$ $(i=1,\ldots,k)$, a slope in $D_i$ is weakly $\le$-detected, and,
        \item with respect to $(\mu_{0},\lambda_{0})$, a slope in $D_0$ is weakly $\le$-detected.
        \end{enumerate}
\end{definition}

The strength of condition (a) in the definition of hybrid order detection lies between that of weak order detection and regular order detection. When $D_i$ is a singleton $\{s_i\}$, this condition is equivalent to the slope $s_i$ being regularly $\le$-detected. When $D_i$ is $\mathbb{RP}^1\setminus\{s_i\}$, this condition is equivalent to the requirement that, for any conjugate order $\le'$ of $\le$, $s_i$ is not the only weakly $\le'$-detected slope.

The hybrid order detection corresponds to the topological operation of splicing two $3$-manifolds, each with multiple torus boundary components, along a torus boundary component. We provide the gluing theorem for hybrid order detection.

\begin{theorem}\label{thm:weak}
    Let $(\mu_{1,i},\lambda_{1,i})$ $(i=0,1,\ldots, k)$ commuting pairs in a group $G_1$, and $(\mu_{2,j},\lambda_{2,j})$ $(j=0,1,\ldots, l)$ be commuting pairs in a group $G_2$. Let $N$ denote the normal subgroup of $G_1* G_2$ generated by $\mu_{1,0}^{-1} \mu_{2,1}$ and $\lambda_{1,0}^{-1}\lambda_{2,1}$. Let $D_{1,i}$ $(i=0,1,\ldots, k)$ and $D_{2,j}$ $(j=0,1,\ldots, l)$ be subsets of $\mathbb{RP}^1$ such that $D_{1,0} \cup D_{2,1}=\mathbb{RP}^1$.
    
    Suppose that there exists a left total preorder $\le$ on $(G_1\ast G_2)/N$ such that \[(D_{2,0}; D_{1,1},\ldots, D_{1,k},D_{2,2},\ldots, D_{2,l})\]
    is hybridly $\le$-detected with respect to 
    \[((\mu_{2,0}N,\lambda_{2,0}N); (\mu_{1,1}N,\lambda_{1,1}N),\ldots,(\mu_{1,k}N,\lambda_{1,k}N),(\mu_{2,2}N,\lambda_{2,2}N),\ldots, (\mu_{2,l}N,\lambda_{2,l}N)).\]
    Then either 
    \begin{enumerate}[label=(\alph*)]
        \item there exists a left total preorder $\le_1$ on $G_1$ such that $(D_{1,0}; D_{1,1},\ldots,D_{1,k})$ is hybridly $\le_1$-detected with respect to $((\mu_{1,0},\lambda_{1,0});(\mu_{1,1},\lambda_{1,1}),\ldots,(\mu_{1,k},\lambda_{1,k}))$, or,
        \item there exists a left total preorder $\le_2$ on $G_2$ such that $(D_{2,0}; D_{2,1},\ldots,D_{2,l})$ is hybridly $\le_2$-detected with respect to $((\mu_{2,0},\lambda_{2,0});(\mu_{2,1},\lambda_{2,1}),\ldots,(\mu_{2,l},\lambda_{2,l}))$.
    \end{enumerate} 
\end{theorem}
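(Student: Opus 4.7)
The plan is to construct the required preorder on either $G_1$ or $G_2$ by pulling back an appropriate conjugate of $\le$ along the natural homomorphisms $\pi_i\colon G_i\to(G_1*G_2)/N$. For $h\in(G_1*G_2)/N$, let $\le^h$ denote the conjugate left total preorder on the quotient, defined by $x\le^h y$ iff $hxh^{-1}\le hyh^{-1}$, and let $(\le^h)_i$ denote its pullback via $\pi_i$; each is a left total preorder.

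First I would verify that, independently of the choice of $h$, the pullback $(\le^h)_i$ automatically inherits hybrid detection at every non-gluing boundary. For $\alpha\ge 1$, weak $(\le^h)_i$-detection with respect to a $G_i$-conjugate of $(\mu_{i,\alpha},\lambda_{i,\alpha})$ translates, after unpacking the pullback and the conjugation by $h$, into weak $\le$-detection with respect to some element of the $(G_1*G_2)/N$-conjugacy class of $(\mu_{i,\alpha}N,\lambda_{i,\alpha}N)$, and the hypothesis on $\le$ supplies this. The distinguished boundary $D_{2,0}$ of $G_2$ requires no conjugation and is inherited directly from the corresponding condition in the quotient. The only nontrivial condition is the one at the gluing boundary. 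To track it, let $\phi(h)\subseteq\mathbb{RP}^1$ denote the set of slopes weakly $\le$-detected with respect to $(h\mu_{1,0}Nh^{-1},h\lambda_{1,0}Nh^{-1})$, which is a single slope when the image of the induced map $\mathbb{Z}^2\to(G_1*G_2)/N$ escapes the residue group of $\le$, and all of $\mathbb{RP}^1$ otherwise. Set $T=\{h:\phi(h)\cap D_{1,0}\ne\emptyset\}$; the hypothesis $D_{1,0}\cup D_{2,1}=\mathbb{RP}^1$ forces $\phi(h)\subseteq D_{2,1}$ whenever $h\notin T$.

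The conclusion then follows from a dichotomy. If $T\ne\emptyset$, pick any $h_0\in T$ and take $\le_1=(\le^{h_0})_1$: the fact that $\phi(h_0)\cap D_{1,0}\ne\emptyset$ translates via the pullback into weak $\le_1$-detection of a slope in $D_{1,0}$ with respect to $(\mu_{1,0},\lambda_{1,0})$, yielding (a). If instead $T=\emptyset$, then $\phi(\pi_2(g))\subseteq D_{2,1}$ for every $g\in G_2$, and the direct pullback $\le_2$ of $\le$ satisfies (b), since at every $G_2$-conjugate of $(\mu_{2,1},\lambda_{2,1})$ some slope in $D_{2,1}$ is weakly $\le_2$-detected. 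The main conceptual input is the use of conjugates $\le^h$ in place of $\le$ itself, which is what gives enough flexibility to guarantee that one of the two cases succeeds; the main technical care is the bookkeeping matching conjugation in $G_i$ with conjugation in the quotient under the pullback, and tracking that the hypothesis on $\le$ applies along the cosets $h_0\pi_1(G_1)$ and $\pi_2(G_2)$ that appear in the two cases.
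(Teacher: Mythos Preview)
Your overall strategy matches the paper's: pull back conjugates of $\le$ to $G_1$ and to $G_2$ and run a dichotomy on whether some conjugate detects a slope in $D_{1,0}$ at the gluing pair. The paper restricts the conjugating element to $G_2$ and phrases the dichotomy as ``does some $\le_g$ with $g\in G_2$ satisfy (a)?'', which is essentially your $T\neq\emptyset$ versus $T=\emptyset$ split. So there is no genuinely different idea here.

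There is, however, a real gap. You assert that each pullback $(\le^h)_i$ ``is a left total preorder'', but in this paper a left total preorder is by definition \emph{proper}, and a pullback can be the trivial relation (when the image of $G_i$ lands in the residue group of $\le^h$). This matters in both branches. In the $T\neq\emptyset$ branch you pick an arbitrary $h_0\in T$; if $(\le^{h_0})_1$ happens to be trivial you cannot conclude (a). In the $T=\emptyset$ branch you use the direct pullback $\le_2$, but you never rule out that $\le_2$ is trivial. The paper handles exactly this issue in its final paragraph: if the pullback to $G_2$ is trivial then $G_2$ lies in the residue group of $\le_0$, hence the pullback to $G_1$ is proper, and since $\mu_{1,0},\lambda_{1,0}$ lie in the subgroup generated by $N$ and $G_2$ they are residual, so every slope (in particular one in $D_{1,0}$) is detected at $(\mu_{1,0},\lambda_{1,0})$ and (a) holds. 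You need an analogous argument; without it, neither branch of your dichotomy is justified.
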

\begin{proof}

    Let $\le_0$ denote the pullback of $\le$ by the quotient map $G_1\ast G_2 \to(G_1\ast G_2)/N$. Then $\le_0$ is a left total preorder on $G_1\ast G_2$ with $x\le_0 y$ if and only if $xN\le yN$ for any $x,y\in G_1\ast G_2$.

    Let $g$ be an arbitrary element in $G_2$. Let $\varphi_g: G_1\ast G_2 \to G_1\ast G_2$ denote the conjugation by $g$. Let $\le_g$ denote the pullback of $\le_0$ by the function composition of the inclusion map $G_1 \to G_1\ast G_2$ and the conjugation $\varphi_g: G_1\ast G_2 \to G_1\ast G_2$. Then either $\le_g$ is the trivial relation or $\le_g$ is a left total preorder on $G_1$. By definition, we have $x\le_g y$ if and only if $xg \le_0 yg$ for any $x,y\in G_1$. If condition (a) is not satisfied, then either $\le_g$ is the trivial relation or no slope in $D_{1,0}$ is weakly $\le_g$-detected with respect to $(\mu_{1,0},\lambda_{1,0})$. In either way, a slope in $D_{2,1}$ is weakly $\le_0$-detected with respect to $(g^{-1}\mu_{1,0}g,g^{-1}\lambda_{1,0}g)$. Because $N$ is in the residue group of $\le_0$, a slope in $D_{2,1}$ is weakly $\le_0$-detected with respect to $(g^{-1}\mu_{2,1}g,g^{-1}\lambda_{2,1}g)$.

    Let $\le_1$ and $\le_2$ denote the pullback of $\le_0$ by the inclusion maps $G_1\to G_1 \ast G_2$ and $G_2\to G_1 \ast G_2$ respectively. If $\le_2$ is the trivial relation, then by $x\le_2 y$ if and only if $x\le_0 y$ for any $x,y\in G_2$, the condition (b) is satisfied by $\le_2$. Now we assume that $\le_2$ is the trivial relation, then $G_2$ is contained in the residue group of $\le_0$. Thus $G_1$ is not contained in the residue group of $\le_0$, and $\le_1$ is a left total preorder with $x\le_1 y$ if and only if $x\le_0 y$ for any $x,y\in G_1$. Since $\mu_{1,0}$ and $\lambda_{1,0}$ are in the subgroup generated by $N$ and $G_2$, they are in the residue group of $\le_1$. Therefore condition (a) is satisfied by $\le_1$.
\end{proof}

We will prove the following order detection result for flat fully augmented chainmail links in Section~\ref{sec:4}. Then by Theorem~\ref{thm:strong}, \cite[Proposition 10]{lisitsa2023automated} and \cite[Proposition 16]{lisitsa2023automated}, the fundamental group of a $3$-manifold obtained from alternating surgeries on flat fully augmented chainmail links does not have left-orderable quotients. In particular, Theorem~\ref{main-1} holds. Furthermore, with Theorem~\ref{thm:strong} and Theorem~\ref{thm:weak}, we can prove the non-left-orderability of many other L-spaces constructed from splicing and Dehn fillings.

\begin{theorem}\label{thm:3.3-main}
    Let $G$ be a planar graph with vertex set $V$ and edge set $E$. Let $V_+$ be a subset of $V$ such that every connected component of $G$ contains at least one vertex in $V_+$. Let $v_o$ be a vertex of $V_+$. Let $L$ denote the flat fully augmented chainmail link with respect to $G$. There is no proper left total preorder $\le$ on $\pi_1(S^3\setminus L)$ such that:
    \begin{enumerate}[label=(\alph*)]
        \item for every edge $e\in E$ and every conjugate of a meridian-longitude pair of $\partial D(e)$, a negative slope is weakly $\le$-detected;
        \item for every vertex $v\in V\setminus V_+$ and a meridian-longitude pair of $\partial D(v)$, the slope $0$ is strongly $\le$-detected;
        \item for every vertex $v\in V_+\setminus\{v_o\}$ and every conjugate of a meridian-longitude pair of $\partial D(v)$, a positive slope is weakly $\le$-detected;
        \item a positive slope is weakly $\le$-detected with respect to a meridian-longitude pair of $\partial D(v_o)$.
    \end{enumerate}
\end{theorem}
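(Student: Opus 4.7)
The plan is to argue by contradiction: suppose such a proper left total preorder $\le$ on $\pi_1(S^3\setminus L)$ exists, and derive a contradiction from the combined constraints (a)--(d). Using the presentation of the link group to be set up in Section~\ref{sec:3}, I will work with explicit generators associated to the vertex- and edge-disks and express each meridian-longitude pair of $\partial D(v)$ and $\partial D(e)$ in those generators. Condition~(b) says that slope $0$ is strongly $\le$-detected at every $v\in V\setminus V_+$, so the corresponding meridian $\mu_v$ lies in the residue group of $\le$; quotienting by these elements simplifies the remaining chainmail relations and reduces the problem to the sub-structure ``seen'' by $V_+$.

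The main step is an inductive propagation of a sign/cofinality invariant along the edges of $G$, modeled on the localized cofinal elements technique of \cite{lin2022non}. Condition~(a) supplies, at each edge $e$, a negative weakly $\le$-detected slope with respect to every conjugate of $(\mu_e,\lambda_e)$, and condition~(c) supplies, at each $v\in V_+\setminus\{v_o\}$, a positive weakly $\le$-detected slope with respect to every conjugate of $(\mu_v,\lambda_v)$. I plan to define, for a vertex $v$ currently being processed and for each conjugating element $g$, auxiliary pairs $(a_v^g,b_e^g)$ encoding the positions of $\mu_v^g,\lambda_v^g$ and the incident $\mu_e^g,\lambda_e^g$ in $\le$, and then prove, in the spirit of \cite[Lemma~3.1]{lin2022non} and \cite[Lemma~3.2]{lin2022non}, that cofinality of $b_e^g$ within an appropriate family at one end of $e$ propagates to cofinality of $b_{e'}^{g'}$ at an adjacent edge $e'$ around the next vertex. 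The conjugation-invariant form of (a) and (c) is precisely what lets the induction be taken uniformly over all conjugates, so that the quantifier complexity stays bounded, as the introduction already foreshadows.

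Because every connected component of $G$ contains a vertex of $V_+$, the induction can be initialized at any such vertex and, by traversing the edges of the component, will terminate at $v_o$; there only condition~(d) is available, giving positive weak detection with respect to a single meridian-longitude pair of $\partial D(v_o)$, not all conjugates. The propagated cofinality statement at $v_o$, combined with (d) and the cyclic arrangement of edges around $v_o$ arising from planarity (this is where the role of planarity in Greene's proof \cite{greene2018alternating} enters, as advertised in the introduction), forces an inequality on $\mu_{v_o}^{p}\lambda_{v_o}^{q}$ that contradicts the positivity supplied by (d). The principal obstacle I anticipate is the choice of the right inductive invariant and the bookkeeping aligning the signs (negative on edges, positive on vertices) with the edge-orientations coming from the planar embedding; once this invariant is fixed correctly, each propagation lemma should reduce to a short computation inside a $\mathbb{Z}^2$-subgroup governed by the weak detection axiom, and the global induction should close cleanly.
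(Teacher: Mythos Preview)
Your plan has the right overall shape---contradiction, explicit generators from Section~\ref{sec:3}, an inductive propagation in the spirit of \cite{lin2022non}---but several of its load-bearing steps are wrong or missing, and as written the induction has no well-founded ordering.

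First, your reading of condition~(b) is incorrect: slope $0$ strongly $\le$-detected with respect to $(\mu_v,\lambda_v)$ means that every conjugate of the \emph{longitude} $\lambda_v$ lies in the residue group, not the meridian $\mu_v$. So ``quotienting by the meridians $\mu_v$ for $v\in V\setminus V_+$'' is not available, and the proposed reduction to a sub-structure seen only by $V_+$ does not go through.

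Second, you have no mechanism for orienting the induction. The paper's key structural step is to build an auxiliary \emph{directed} graph $\tilde G$ on $V$ by orienting each edge $e$ according to the sign of $\lambda_e$ under $\le$, and then to prove, using the cycle relations among the $\lambda_e$, that $\tilde G$ has no directed cycles (Lemma~\ref{lem:DAG}). The induction is then over a topological ordering of this DAG, not over a traversal of $G$ starting at a vertex of $V_+$ and ending at $v_o$. Without the DAG (or something playing its role), your ``traverse the edges of the component'' plan has no termination argument and no clear base case; note also that $v_o\in V_+$, so ``initialize in $V_+$ and terminate at $v_o$'' is not a coherent direction.

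Third, the contradiction is not localized at $v_o$. The paper's inductive invariant (statements \textbf{S1}--\textbf{S3} bounding $\mu_{e_i}^{\varepsilon_i}$ against powers of $\mu_{v,e_i}$) is run once for $\le$ and once for $\ge$, yielding that $\mu_{v,e}$, $\mu_e$, and $\lambda_e$ lie in the residue group for every $v\in V_+$ and every non-isolated $v$ of $\tilde G$; this then propagates through all of $V$ by connectedness, forcing every generator into the residue group and contradicting properness of $\le$. Condition~(d) is used only to allow a WLOG choice of base conjugate at $v_o$ at the outset, not to extract a final inequality at $v_o$. Your proposed endgame---a sign clash at $v_o$ from (d) plus planarity---is not how the argument closes.
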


\section{Flat fully augmented chainmail link}\label{sec:3}
Consider a finite planar graph $G\subset S^2$ with vertex set $V$ and edge set $E$. For each vertex $v\in V$, draw a planar disk $D(v) \subset S^2$, such that these disks are pairwise disjoint. For each edge $e=(v_1,v_2)\in E$, draw a simple planar curve $C(e)\subset S^2$ connecting a point in the interior of $D(v_1)$ and a point in the interior of $D(v_2)$, such that these curves are pairwise disjoint. Suppose that, if we shrink each disk to a vertex, then these vertices together with the curves $C(e)$ form the planar disk $G\subset S^2$. See Figure~\ref{fig:2} for an example.

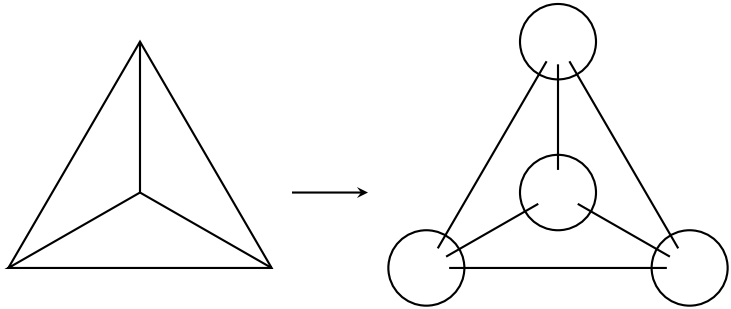
\begin{figure}[!htbp] 
\centering
\begin{tikzpicture}
\draw[thick] (-150:2) -- (90:2) -- (-30:2) -- (-150:2)
--(0:0); 
\draw[thick] (90:2)--(0:0) -- (-30:2);
\draw [-stealth,thick](2,0) -- (3,0);
\draw[thick] (5.5,0) circle [radius=0.5];
\draw[thick] (5.5,2) circle [radius=0.5];
\draw[thick] ({5.5-sqrt(3)},-1) circle [radius=0.5];
\draw[thick] ({5.5+sqrt(3)},-1) circle [radius=0.5];
\draw[thick] (5.5,0.3) -- (5.5,1.7);
\draw[thick] ({5.5-sqrt(3)*0.15},-0.15) -- 
                ({5.5-sqrt(3)*0.85},-0.85);
\draw[thick] ({5.5+sqrt(3)*0.15},-0.15) -- 
                ({5.5+sqrt(3)*0.85},-0.85);
\draw[thick] ({5.5+sqrt(3)-0.15},{-1+0.15*sqrt(3)}) -- 
            ({5.5+0.15},{2-0.15*sqrt(3)});
\draw[thick] ({5.5-sqrt(3)+0.15},{-1+0.15*sqrt(3)}) -- 
            ({5.5-0.15},{2-0.15*sqrt(3)});
\draw[thick] ({5.5-sqrt(3)+0.3},-1) -- ({5.5+sqrt(3)-0.3},-1);
\end{tikzpicture}
\caption{From a planar graph $G=(V,E)$ to disks $D(v)$ $(v\in V)$ and curves $C(e)$ $(e\in E)$.}\label{fig:2}
\end{figure}

Decompose the $3$-sphere $S^3$ as two $3$-balls $B_+$ and $B_-$ with common boundary $S^2$. For each edge $e\in E$, let $C_+(e)$ and $C_-(e)$ be two curves in $B_+$ and $B_-$ respectively sharing the same endpoints with $C(e)$, such that there exists an isotopoy of the curves $C_+(e)$ $(e\in E)$ (resp., the curves $C_-(e)$ $(e\in E)$) to the curves $C(e)$ $(e\in E)$ in $B_+$ (resp., in $B_-$). Then the link \[\bigcup_{v\in V} \partial D(v) \cup \bigcup_{e\in E} \left(C_+(e)\cup  C_-(e)\right)\] in $S^3$ is determined by the planar graph $G$. This link, denoted as $L$, is called a flat fully augmented chainmail link. See \cite[Figure 8]{agol2023chainmail} for an example. It is worth mentioning that Abchir and Sabak \cite{abchir2023infinite, abchir2023left} considered the orderability of the fundamental groups of L-space surgeries on an augmented alternating link. The main difference is that they only considered the augmentation of an alternating link with one circle, therefore it is not fully augmented.

The knot components $\partial D(v)$ $(v\in V)$ are called vertex loops, while $C_+(e)\cup  C_-(e)$ $(e\in E)$ are called edge loops or crossing loops. For convenience, for each $e\in E$, let $D(e)$ denote a disk with boundary $C_+(e)\cup  C_-(e)$ and intersecting $S^2\subset S^3$ at $C(e)$. We can assume that the disks $D(e)$ $(e\in E)$ are pairwise disjoint. 

A presentation of the link group of $L$ can be computed using the Wirtinger presentation. We follow the same strategy as in the proof of \cite[Theorem]{agol2023chainmail} to derive a group presentation and to identify the meridian and longitude cycles. 

We orient the edges in $E$ arbitrarily, and then specify an orientation of $L$ as follows.
\begin{enumerate}[label=(\alph*)]
    \item Each vertex loop is oriented counterclockwise;
    \item For each edge $e$ going from $v_1$ to $v_2$, orient the edge loop $\partial D(e)$ such that it is positively linked with $\partial D(v_1)$. 
\end{enumerate}
Then we can check that the following group elements generate the link group.
\begin{enumerate}[label=(\alph*)]
    \item For each edge $e$, let $\mu_e\in \pi_1(S^3\setminus L)$ be represented by a loop in the interior of $B_+$ intersecting $D(e)$ once positively, and disjoint from any $D(e')$ with $e'\in E\setminus \{e\}$.  
    \item For each edge $e$ going from $v_1$ to $v_2$, let $\lambda_e \in \pi_1(S^3\setminus L)$ be represented by a loop intersecting $D(v_1)$ once positively, intersecting $D(v_2)$ once negatively, and having no other intersections with $S^2\subset S^3$ or any $D(e')$ $(e'\in E)$.
    \item For a vertex $v$ incident with an edge $e$, let $\mu_{v,e} \in \pi_1(S^3\setminus L)$ be represented by a loop intersecting $D(v)$ once positively, intersecting $S^2\setminus D(v)$ on the clockwise side of $C(e)$ once negatively, and having no other intersections with $S^2\subset S^3$ or any $D(e')$ $(e'\in E)$.
\end{enumerate}
Furthermore, the elements $\mu_e$ and $\lambda_e$ are represented by the meridian and longitude of the edge loop $\partial D(e)$.

For each vertex $v$, let $e_1, e_2,\ldots, e_k$ be edges adjacent to $v$ in counterclockwise order. Define $\varepsilon_1,\varepsilon_2\ldots, \varepsilon_k\in \{-1,1\}$ by
    \[\varepsilon_i=\begin{cases}
        -1 &\mbox{, if } v \mbox{ is the head of }e_i,\\ 
        1 &\mbox{, if } v \mbox{ is the tail of }e_i.\\ 
    \end{cases}\]
Then the elements $\mu_{v,e_1}$ and $\mu_{e_1}^{\varepsilon_1}\mu_{e_2}^{\varepsilon_2}\cdots \mu_{e_k}^{\varepsilon_k}$ are represented by the meridian and longitude of the vertex loop $\partial D(v)$. By the relations \[\mu_{v,e_i}=\mu_{e_i}^{\varepsilon_i}\mu_{v,e_{i+1}}\mu_{e_i}^{-\varepsilon_i},\] the pairs $(\mu_{v,e_i},\mu_{e_i}^{\varepsilon_i}\mu_{e_{i+1}}^{\varepsilon_{i+1}}\cdots \mu_{e_{i-1}}^{\varepsilon_{i-1}})$ $(i=1,2,\ldots, k)$ are conjugate with each other.

For each edge $e$ going from $v_1$ to $v_2$, the relation \[ \mu_{v_2,e}\lambda_e =\mu_e^{-1} \mu_{v_1,e} \mu_e\] holds. Equivalently, we have \[\mu_{v_1,e}\lambda_e^{-1} =\mu_e \mu_{v_2,e} \mu_e^{-1}.\]

Let $v_1, e_1, v_2, e_2, \ldots, v_k, e_k$ be a sequence denoting an undirected cycle in $G$. Define $\varepsilon_1,\varepsilon_2\ldots, \varepsilon_k\in \{-1,1\}$ by
    \[\varepsilon_i=\begin{cases}
        -1 &\mbox{, if } e_i \mbox{ goes from  }v_i \mbox{ to } v_{i+1},\\ 
        1 &\mbox{, if } e_i \mbox{ goes from }v_{i+1}\mbox{ to } v_{i}.\\ 
    \end{cases}\]
Then the cycle relation \[\lambda_{e_1}^{\varepsilon_1}\lambda_{e_2}^{\varepsilon_2}\cdots\lambda_{e_k}^{\varepsilon_k}=1\] holds.

\section{Proof of Theorem~\ref{thm:3.3-main}}\label{sec:4}
    Let $\le$ be a proper left total preorder on $\pi_1(S^3\setminus L)$ satisfying all the conditions. For every $g\in \pi_1(S^3\setminus L)$, we define the conjugate of $\le$ by $g$, denoted by $\le_g$, as $x\le_g y$ if and only if $g x\le g y$. Then $\le_g$ is a proper left total preorder on $\pi_1(S^3\setminus L)$ satisfying conditions (a), (b) and (c), and also condition (d) with respect to a conjugate of the meridian-longitude pair. Hence, we may specify the meridian-longitude pair of $\partial D(v_o)$ in condition (d) to be the pair defined in Section~\ref{sec:3}  without loss of generality.
    
    Consider a direct graph $\tilde{G}=(V,\tilde{E})$ with an edge \[\tilde{e}:=\begin{cases}
        (v_1, v_2)&\mbox{, if } \lambda_e\le 1\mbox{ does not hold;}\\
        (v_2, v_1)&\mbox{, if } 1\le\lambda_e\mbox{ does not hold.}
    \end{cases}\] for each $e=(v_1,v_2)\in E$ such that $\lambda_e$ is not in the residue group of $\le$. We prove that $\tilde{G}$ is a directed acyclic graph using the cycle relations by contradiction. 

    \begin{lemma}\label{lem:DAG}
        The graph $\tilde{G}$ does not have directed cycles.
    \end{lemma}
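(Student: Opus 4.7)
The plan is to assume for contradiction that $\tilde{G}$ has a directed cycle, and to derive a contradiction by invoking the cycle relation from the end of Section~\ref{sec:3}. Write the directed cycle as $v_1,e_1,v_2,e_2,\ldots,v_k,e_k$, where $\tilde{e}_i$ is the directed edge from $v_i$ to $v_{i+1}$, each $\lambda_{e_i}$ lies outside the residue group of $\le$, and $\varepsilon_i\in\{\pm 1\}$ are the signs attached to $e_i$ as in Section~\ref{sec:3}. Then the cycle relation $\lambda_{e_1}^{\varepsilon_1}\lambda_{e_2}^{\varepsilon_2}\cdots\lambda_{e_k}^{\varepsilon_k}=1$ is the identity I intend to contradict.

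The main step is to show that $\lambda_{e_i}^{\varepsilon_i}<1$ strictly for every~$i$. If $e_i$ points from $v_i$ to $v_{i+1}$, then $\varepsilon_i=-1$ and, by the defining convention of $\tilde{G}$, the relation $\lambda_{e_i}\le 1$ fails, hence $\lambda_{e_i}>1$ and so $\lambda_{e_i}^{\varepsilon_i}=\lambda_{e_i}^{-1}<1$. If instead $e_i$ points from $v_{i+1}$ to $v_i$, then $\varepsilon_i=+1$ and by the same convention $1\le\lambda_{e_i}$ fails, so $\lambda_{e_i}^{\varepsilon_i}=\lambda_{e_i}<1$. In either case the $i$-th factor is strictly less than~$1$.

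To finish, I would use the general fact that a product of elements that are strictly less than~$1$ in a left total preorder remains strictly less than~$1$: if $a<1$ and $b\le 1$, then left-multiplying $b\le 1$ by $a$ gives $ab\le a\le 1$, and if we had $1\le ab$, then combined with $ab\le a$ this would force $1\le a$, contradicting $a<1$. Iterating this along the cycle yields $\lambda_{e_1}^{\varepsilon_1}\lambda_{e_2}^{\varepsilon_2}\cdots\lambda_{e_k}^{\varepsilon_k}<1$, contradicting the cycle relation and completing the proof. The only real obstacle is bookkeeping the signs $\varepsilon_i$ against the two possible orientations of $e_i$ relative to the cycle; conditions (a)--(d) of Theorem~\ref{thm:3.3-main} are not needed for this lemma, as they will enter only in the subsequent induction along the directed acyclic graph $\tilde{G}$.
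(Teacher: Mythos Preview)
Your proof is correct and follows essentially the same approach as the paper: assume a directed cycle, verify via the case split on the orientation of each $e_i$ that $1\le\lambda_{e_i}^{\varepsilon_i}$ fails for every $i$, and then use left-invariance together with the cycle relation $\lambda_{e_1}^{\varepsilon_1}\cdots\lambda_{e_k}^{\varepsilon_k}=1$ to force $1\le\lambda_{e_1}^{\varepsilon_1}$, a contradiction. The paper's write-up is slightly terser (it simply notes $\lambda_{e_i}^{\varepsilon_i}\le 1$ for $i\ge 2$ and chains the inequalities directly), but your more explicit handling of the strict inequality and the product step is equivalent.
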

    \begin{proof}
       Suppose that $v_1,\tilde{e}_1, v_2,\tilde{e}_2,\ldots, v_k, \tilde{e}_k$ is a sequence denoting a directed cycle in $\tilde{G}$, and $\varepsilon_1,\varepsilon_2\ldots, \varepsilon_k$ are exponents in $\{-1,1\}$ such that
    \[\varepsilon_i=\begin{cases}
        -1 &\mbox{, if } e_i \mbox{ goes from  }v_i \mbox{ to } v_{i+1},\\ 
        1 &\mbox{, if } e_i \mbox{ goes from }v_{i+1}\mbox{ to } v_{i}.\\ 
    \end{cases}\]
    Then for each $i=1,2\ldots, k$, the inequality $1\le \lambda_{e_i}^{\varepsilon_i}$ does not hold. In particular, we have $\lambda_{e_i}^{\varepsilon_i}\le 1$ for $i=2,3,\ldots, k-1$. By the cycle relation, we have $1=\lambda_{e_1}^{\varepsilon_1}\lambda_{e_2}^{\varepsilon_2}\ldots \lambda_{e_k}^{\varepsilon_k}\le \lambda_{e_1}^{\varepsilon_1}$, which leads to a contradiction. 
    \end{proof}
    
    For each $v\in V$, let $e_1, e_2,\ldots, e_k$ be edges adjacent to $v$ in counterclockwise order. Define $\varepsilon_1,\varepsilon_2\ldots, \varepsilon_k\in \{-1,1\}$ by
    \[\varepsilon_i=\begin{cases}
        -1 &\mbox{, if } v \mbox{ is the head of }e_i,\\ 
        1 &\mbox{, if } v \mbox{ is the tail of }e_i.\\ 
    \end{cases}\]
    We consider the following statements for each vertex $v$.
    \begin{enumerate}
        \item[\textbf{S1.}] There exists a nonnegative integer $n^{+}_i$ such that $\mu_{e_i}^{\varepsilon_i}\ge \mu_{v,e_i}^{n^+_i}$.
        \item[\textbf{S2.}] There exists a negative integer $n^{-}_i$ such that $\mu_{e_i}^{\varepsilon_i}\le  \mu_{v,e_i}^{n^-_i}$.
        \item[\textbf{S3.}] The inequality $\mu_{v,e_i}\le 1$ holds.
    \end{enumerate}
    We prove two lemmas about \textbf{S1}, \textbf{S2}, and \textbf{S3}.
    
    \begin{lemma}\label{lem:2,3->1}
        Suppose that \textbf{S2} and \textbf{S3} hold for all vertices $v'$ such that there exists an edge going from $v'$ to $v$ in $\tilde{G}$. Then \textbf{S1} holds for $v$.
    \end{lemma}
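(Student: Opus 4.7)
For each edge $e_i$ adjacent to $v$, I would carry out a trichotomy on the preorder status of $\lambda_{e_i}^{\varepsilon_i}$. Case (i) covers $\lambda_{e_i}^{\varepsilon_i}>1$ or $\lambda_{e_i}$ lying in the residue group $R$ of $\le$; in both subcases I claim $\mu_{e_i}^{\varepsilon_i}\ge 1$, so S1 holds with $n^+_i=0$. If $\lambda_{e_i}^{\varepsilon_i}>1$, the negative slope detection for $(\mu_{e_i},\lambda_{e_i})$ granted by condition (a) of Theorem~\ref{thm:3.3-main} forces the first quadrant of the pullback preorder on $\mathbb{Z}^2$ to be entirely $\ge 1$, and this gives $\mu_{e_i}^{\varepsilon_i}\ge 1$ regardless of the sign convention for $\varepsilon_i$. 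If instead $\lambda_{e_i}\in R$ while $\mu_{e_i}\notin R$, then $(0,1)$ is in the residue of the pullback preorder but $(1,0)$ is not, so the positive cone in $\mathbb{Z}^2$ is the closed half-plane bounded by the $q$-axis; the uniquely weakly detected slope is then $[0{:}1]$, which is not negative and contradicts condition (a). Hence $\mu_{e_i}\in R$ and $\mu_i\equiv 1\ge 1$.

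Case (ii) is $\lambda_{e_i}^{\varepsilon_i}<1$, where by the construction of $\tilde G$ there is an edge $v'_i\to v$ in $\tilde G$, and the hypothesis supplies S2 and S3 at $v':=v'_i$. Writing $m':=m_{v',e_i}$, the Wirtinger relations in Section~\ref{sec:3} combined with the commutativity of $\mu_i$ with $\lambda_{e_i}$ yield $m_i=\mu_i m'\mu_i^{-1}\lambda_{e_i}^{\varepsilon_i}$, equivalently $\mu_i^{-1}m_i\mu_i=m'\lambda_{e_i}^{\varepsilon_i}$. The cyclic relation at $v'$ identifies $\mu_i m'\mu_i^{-1}$ with $m_{v',f}$, where $f$ is the edge of $v'$ immediately counterclockwise after $e_i$ in the cyclic order at $v'$; by S3 at $v'$ this is $\le 1$, and multiplying by $\lambda_{e_i}^{\varepsilon_i}<1$ gives $m_i<1$.

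To produce $n^+_i$ in Case (ii), I use the conjugation identity $m_i^n=\mu_i(m'\lambda_{e_i}^{\varepsilon_i})^n\mu_i^{-1}$ to rewrite the target inequality $m_i^n\le\mu_i$ as $(m'\lambda_{e_i}^{\varepsilon_i})^n\mu_i^{-1}\le 1$. If $\mu_i\ge 1$ I take $n^+_i=0$; otherwise $\mu_i<1$ and I combine the S2 bound at $v'$, rewritten as $(m')^{|n^-|}\mu_i^{-1}\le 1$, with the facts $m'\le 1$, $\lambda_{e_i}^{\varepsilon_i}\le 1$, and the commutativity of $\mu_i$ with $\lambda_{e_i}$ to locate an $n^+_i$ making the desired bound hold. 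The plan is to iteratively push the $\mu_i^{-1}$ on the right past each commuting $\lambda_{e_i}^{\varepsilon_i}$ factor one at a time, in each step replacing a factor $(m'\lambda_{e_i}^{\varepsilon_i})$ on the left by the strictly smaller $m'$ using $\lambda_{e_i}^{\varepsilon_i}\le 1$ and left-invariance, until the accumulated block $(m')^{n^+_i}\mu_i^{-1}$ can be bounded by $1$ via S2.

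The main obstacle will be the non-commutativity between $m'$ and $\lambda_{e_i}^{\varepsilon_i}$, which blocks any direct factorization of the word $(m'\lambda_{e_i}^{\varepsilon_i})^{n^+_i}\mu_i^{-1}$ into the more tractable $(m')^{n^+_i}(\lambda_{e_i}^{\varepsilon_i})^{n^+_i}\mu_i^{-1}$. The final combinatorial step will have to interleave left-invariance of $\le$ with the partial commutation available between $\mu_i$ and $\lambda_{e_i}$, exploiting that conjugates of $m'$ by $\mu_i$ remain controlled by S3 thanks to the cyclic structure at $v'$, and choosing $n^+_i$ as a function of $|n^-|$ furnished by S2 at $v'$.
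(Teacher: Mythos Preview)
Your Case (i) is correct and matches the paper's first case ($\lambda_{e_i}^{\varepsilon_i}\ge 1$).

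Your Case (ii) has a genuine gap. After recording $(m')^{|n^-|}\mu_i^{-1}\le 1$ from \textbf{S2} at $v'$ and aiming for $(m'\lambda_{e_i}^{\varepsilon_i})^{n}\mu_i^{-1}\le 1$, your plan to ``push $\mu_i^{-1}$ past each $\lambda_{e_i}^{\varepsilon_i}$'' stalls exactly where you say it does: once one $\lambda_{e_i}^{\varepsilon_i}$ has been commuted past $\mu_i^{-1}$, the next one is blocked by an intervening $m'$. Your suggested remedy---that conjugates of $m'$ by powers of $\mu_i$ are controlled by \textbf{S3} via the cyclic relations at $v'$---only works for the single conjugate $\mu_i m'\mu_i^{-1}=\mu_{v',f}$. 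For $\mu_i^{k} m'\mu_i^{-k}$ with $k\ge 2$ the next cyclic relation at $v'$ conjugates by $\mu_{f}^{\pm1}$, not by $\mu_{e_i}^{\pm1}$, so the chain breaks and these higher conjugates are not bounded by \textbf{S3}.

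The idea you are missing is that condition (a) of Theorem~\ref{thm:3.3-main} applies to \emph{every conjugate} of $(\mu_{e_i},\lambda_{e_i})$, not just the base pair. The paper rewrites \textbf{S2} at $v'$ in terms of $m_i:=\mu_{v,e_i}$ (rather than $m'$) as $(\lambda_{e_i}^{\varepsilon_i}m_i^{-1})^{-n'}\mu_i\ge 1$. If $\mu_i\ge m_i^{-n'}$ we are done; otherwise the telescoping product $\prod_{j=0}^{-n'-1}m_i^{-j}\lambda_{e_i}^{\varepsilon_i}m_i^{j}=(\lambda_{e_i}^{\varepsilon_i}m_i^{-1})^{-n'}m_i^{-n'}$ is $\ge 1$, so some single conjugate $m_i^{-n''}\lambda_{e_i}^{\varepsilon_i}m_i^{n''}$ is $\ge 1$. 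Now condition (a), applied to the conjugate pair $(m_i^{-n''}\mu_{e_i}m_i^{n''},\,m_i^{-n''}\lambda_{e_i}m_i^{n''})$, converts this into $m_i^{-n''}\mu_i m_i^{n''}\ge 1$. Finally \textbf{S3} at $v'$ for the edge $f$ following $e_i$ (which, as you correctly observed, gives $\mu_i m'\mu_i^{-1}=m_i\lambda_{e_i}^{-\varepsilon_i}\le 1$) lets one reassemble $\mu_i\ge m_i^{n''}$. Without invoking condition (a) at a conjugate meridian--longitude pair, the inequality \textbf{S1} cannot be extracted from \textbf{S2} and \textbf{S3} alone.
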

    \begin{proof}
        Suppose that $e_i$ connects $v$ to another vertex $v'$. We consider two scenarios.

        First, suppose that $\lambda_{e_i}^{\varepsilon_i}\ge 1$. By condition (a), a negative slope is weakly $\le$-detected with respect to the meridian-longitude pair $(\lambda_{e_i},\mu_{e_i})$. Thus we have $\mu_{e_i}^{\varepsilon_i}\ge 1$, and \textbf{S1} holds with $n_i^{+}=0$.

        Second, suppose that $\lambda_{e_i}^{\varepsilon_i}\ge 1$ does not hold. By definition, the edge $\tilde{e}_i$ goes from $v'$ to $v$. By \textbf{S2} for $v'$, there exists a negative integer $n'$ such that
        \[\mu_{e_i}^{-\varepsilon_i}\le (\mu_{e_i}^{-\varepsilon_i} \mu_{v,e_i} \mu_{e_i}^{\varepsilon_i}\lambda_{e_i} ^{-\varepsilon_i})^{n'},\]
        which is equivalent to 
        \[(\lambda_{e_i}^{\varepsilon_i} \mu_{v,e_i}^{-1} )^{-n'} \mu_{e_i}^{\varepsilon_i}\ge 1.\]
        If $\mu_{e_i}^{\varepsilon_i}\ge \mu_{v,e_i}^{-n'}$, then \textbf{S1} holds for $n_i^+ =-n'$. Otherwise, we have 
        \begin{align*}
            &\lambda_{e_i}^{\varepsilon_i}(\mu_{v,e_i}^{-1}\lambda_{e_i}^{\varepsilon_i}\mu_{v,e_i})(\mu_{v,e_i}^{-2}\lambda_{e_i}^{\varepsilon_i}\mu_{v,e_i}^2)\cdots(\mu_{v,e_i}^{n'+1}\lambda_{e_i}^{\varepsilon_i}\mu_{v,e_i}^{-n'-1})\\
            =& (\lambda_{e_i}^{\varepsilon_i} \mu_{v,e_i}^{-1} )^{-n'} \mu_{v,e_i}^{-n'}\\
            \ge& (\lambda_{e_i}^{\varepsilon} \mu_{v,e_i}^{-1} )^{-n'} \mu_{e_i}^{\varepsilon_i}\\
            \ge &1.
        \end{align*}
        In this case, there exists a nonnegative integer $n''\in\{0,1,2,\ldots -n'-1\}$ such that \[\mu_{v,e_i}^{-n''}\lambda_{e_i}^{\varepsilon_i}\mu_{v,e_i}^{n''}\ge 1.\]
        By condition (a), a negative slope is weakly $\le$-detected with respect to the meridian-longitude pair $(\mu_{v,e_i}^{-n''}\mu_{e_i}\mu_{v,e_i}^{n''},\mu_{v,e_i}^{-n''}\lambda_{e_i}\mu_{v,e_i}^{n''})$. Thus we have \[\mu_{v,e_i}^{-n''}\mu_{e_i}^{\varepsilon_i}\mu_{v,e_i}^{n''}\ge 1.\]
        By \textbf{S3} for $v'$, we have \[\mu_{v,e_i} \lambda_{e_i} ^{-\varepsilon_i}\le 1.\]
        Thus we have 
        \[\mu_{e_i}^{\varepsilon_i}=\mu_{v,e_i}^{n''}(\mu_{v,e_i}^{-n''}\mu_{e_i}^{\varepsilon_i}\mu_{v,e_i}^{n''})(\lambda_{e_i}^{-\varepsilon_i}(\lambda_{e_i}^{\varepsilon_i}\mu_{v,e_i}^{-1}))^{n''}\ge \mu_{v,e_i}^{n''}.\]
        In this case \textbf{S1} holds for $n^+_i=n''$. 
    \end{proof}

    \begin{lemma}\label{lem:1->2,3}
        Suppose that \textbf{S1} holds for $v$. Then either $v$ is in $V\setminus V_+$ and isolated in $\tilde{G}$, or \textbf{S2} and \textbf{S3} hold for $v$. 
    \end{lemma}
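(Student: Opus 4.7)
The plan is to split the argument into two cases according to whether $v$ lies in $V \setminus V_+$ or in $V_+$, invoking the relevant detection condition from Theorem~\ref{thm:3.3-main} in each.

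For $v \in V \setminus V_+$, condition (b) strongly $\le$-detects slope $0$ with respect to the canonical meridian-longitude pair of $\partial D(v)$. This places the relevant group element and all its conjugates in the residue group, and, combined with the conjugacy among the pairs $(\mu_{v,e_i},\mu_{e_i}^{\varepsilon_i}\mu_{e_{i+1}}^{\varepsilon_{i+1}}\cdots\mu_{e_{i-1}}^{\varepsilon_{i-1}})$ from Section~\ref{sec:3}, pins down the triviality of the $\mu_{v,e_i}$'s or a certain longitudinal product. The key auxiliary input is condition (a)'s negative-slope detection for each edge loop $(\mu_{e_i},\lambda_{e_i})$: a negative separating slope forces $\mu_{e_i}$ and $\lambda_{e_i}$ to have matched triviality --- either both in the residue group or both strictly non-trivial with the same sign --- since a kernel direction through $(1,0)$ would be horizontal and incompatible with negative slope. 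I would use this dichotomy to conclude: if every $\lambda_{e_i}$ at $v$ lies in the residue group, then $v$ is isolated in $\tilde G$, yielding outcome (i); otherwise, matched triviality propagates through \textbf{S1} to give \textbf{S2} and \textbf{S3}, yielding outcome (ii).

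For $v \in V_+$, condition (c) (for $v \neq v_o$) or condition (d) (for $v = v_o$) provides positive-slope weak $\le$-detection. Positive-slope detection for the pair $(\mu_{v,e_i},\lambda_v^{(i)})$, where $\lambda_v^{(i)} = \mu_{e_i}^{\varepsilon_i}\mu_{e_{i+1}}^{\varepsilon_{i+1}}\cdots\mu_{e_{i-1}}^{\varepsilon_{i-1}}$, means $\mu_{v,e_i}$ and $\lambda_v^{(i)}$ lie on opposite sides of a positive-slope separating line and hence have opposite signs, unless the pair degenerates into the residue group. I would establish \textbf{S3} by contradiction: assuming $\mu_{v,e_i}>1$ strictly, positive-slope detection forces $\lambda_v^{(i)}<1$; but \textbf{S1} at index $i$ gives $\mu_{e_i}^{\varepsilon_i}\ge\mu_{v,e_i}^{n_i^+}\ge 1$, so the cyclic tail $\mu_{e_{i+1}}^{\varepsilon_{i+1}}\cdots\mu_{e_{i-1}}^{\varepsilon_{i-1}}$ must itself be strictly below $1$. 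Iterating this argument around the cycle of edges incident to $v$, and combining with positive-slope detection at each cyclically shifted (hence conjugate) pair, produces a contradiction. Then \textbf{S2} follows: once \textbf{S3} is in hand, positive-slope detection places $\mu_{e_i}^{\varepsilon_i}$ in a half-plane that, combined with \textbf{S1} and a suitable power comparison against the separating slope, bounds it above by a negative power of $\mu_{v,e_i}$.

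The most delicate step I anticipate is the subcase $v = v_o$ in the second case. Condition (d) provides positive-slope detection only for the canonical pair $(\mu_{v_o,e_1},\lambda_{v_o})$, not for all its conjugates, so the cyclic iteration for \textbf{S3} cannot proceed symmetrically at indices $i>1$. I would need to reduce each cyclic claim back to the canonical pair using the conjugacy relations that equate $(\mu_{v_o,e_i},\lambda_{v_o}^{(i)})$ with $(\mu_{v_o,e_1},\lambda_{v_o})$ up to conjugation by a product of $\mu_{e_j}^{\varepsilon_j}$'s from Section~\ref{sec:3}, carefully controlling the conjugating element so that the positive-slope detection at the canonical pair translates into usable inequalities at each index.
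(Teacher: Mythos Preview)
Your proposal has a genuine gap: the central mechanism you rely on --- iterating positive-slope detection around the cyclically shifted pairs $(\mu_{v,e_i},\lambda_v^{(i)})$ --- does not produce the contradiction you claim, and more importantly it requires detection at \emph{every} shifted pair, which is exactly what you yourself flag as unavailable for $v_o$. Concretely: from $\mu_{v,e_i}>1$ and a positive weakly $\le$-detected slope you only obtain $\lambda_v^{(i)}\le 1$, not a strict inequality; combining with \textbf{S1} gives the tail $\mu_{e_{i+1}}^{\varepsilon_{i+1}}\cdots\mu_{e_{i-1}}^{\varepsilon_{i-1}}\le 1$, but this says nothing about the sign of the conjugate element $\mu_{v,e_{i+1}}$, so the ``iteration'' does not chain. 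Your sketch of \textbf{S2} is similarly incomplete: $\mu_{e_i}^{\varepsilon_i}$ does not lie in the rank-two lattice generated by $(\mu_{v,e_i},\lambda_v^{(i)})$, so slope detection on that pair does not directly bound it by a power of $\mu_{v,e_i}$. Finally, in your $V\setminus V_+$ branch the phrase ``matched triviality propagates through \textbf{S1} to give \textbf{S2} and \textbf{S3}'' hides the entire content of the argument.

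What the paper does instead is an algebraic trick you are missing. Using the conjugation relations $\mu_{v,e_i}=\mu_{e_i}^{\varepsilon_i}\mu_{v,e_{i+1}}\mu_{e_i}^{-\varepsilon_i}$, one rewrites the product $\prod_i(\mu_{v,e_i}^{-n_i^+}\mu_{e_i}^{\varepsilon_i})$ --- which is $\ge 1$ by \textbf{S1} --- as $\mu_{v,e_1}^{-\sum_j n_j^+}\cdot\lambda_v^{(1)}$. This single inequality lives at the \emph{canonical} pair, where condition (b), (c) or (d) applies directly. One then extracts a negative integer $n'$ with $(\lambda_v^{(1)})^{-1}\mu_{v,e_1}^{n'}\ge 1$ (coming either from positive-slope detection or, when $v\in V\setminus V_+$, from the longitude being in the residue group together with $\sum n_j^+>0$), and a second telescoping identity unpacks this back to $\mu_{e_i}^{\varepsilon_i}\le\mu_{v,e_i}^{n'+n_i^+-\sum_j n_j^+}$ for every $i$, giving \textbf{S2} at all indices simultaneously; \textbf{S3} then follows from sandwiching \textbf{S1} and \textbf{S2}. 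The point is that detection is invoked exactly once, at the canonical pair, so the $v_o$ difficulty never arises.
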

    \begin{proof}
        Because $\mu_{e_i}^{\varepsilon_i}\ge \mu_{v,e_i}^{n^+_i}$ holds for $i=1,2,\ldots, k$, we have 
        \[\mu_{v,e_1}^{-\sum_{j=1}^k n_j^+}(\mu_{e_1}^{\varepsilon_1} \mu_{e_{2}}^{\varepsilon_{2}} \cdots \mu_{e_{k}}^{\varepsilon_{k}})=(\mu_{v,e_1}^{-n^+_1}\mu_{e_1}^{\varepsilon_1})(\mu_{v,e_{2}}^{-n^+_{2}}\mu_{e_{2}}^{\varepsilon_{2}}) \cdots (\mu_{v,e_{k}}^{-n^+_{k}}\mu_{e_{k}}^{\varepsilon_{k}})\ge 1. \]
        With respect to the meridian-longitude pair $(\mu_{v,e_1}, \mu_{e_1}^{\varepsilon_1} \mu_{e_{2}}^{\varepsilon_{2}} \cdots \mu_{e_{k}}^{\varepsilon_{k}})$, either $v\in V\setminus V_+$ and $0$ is strongly $\le$-detected by condition (b), or a positive slope is weakly $\le$-detected by conditions (c) and (d).

        First, suppose that $v\in V\setminus V_+$ and $0$ is strongly $\le$-detected, and $n_i^+=0$ for each $i=1,2,\ldots,k$. Then we have $\mu_{e_i}^{\varepsilon_i} \ge 1$ for $i=1,2,\ldots, k$ and $\mu_{e_1}^{\varepsilon_1} \mu_{e_{2}}^{\varepsilon_{2}} \cdots \mu_{e_{k}}^{\varepsilon_{k}}\le 1$. In this case, we have \[1\le \mu_{e_{i+1}}^{\varepsilon_{i+1}}\ldots\mu_{e_{k}}^{\varepsilon_{k}}(\mu_{e_1}^{\varepsilon_1} \mu_{e_{2}}^{\varepsilon_{2}} \cdots \mu_{e_{k}}^{\varepsilon_{k}})^{-1}\mu_{e_{1}}^{\varepsilon_{1}}\ldots\mu_{e_{i-1}}^{\varepsilon_{i-1}}=\mu_{e_i}^{-\varepsilon_i} \le 1,\]
        which implies $1\le \mu_{e_i}\le 1$. By condition (a), a negative slope is weakly $\le$-detected with respect to the meridian-longitude pair $(\mu_{e_i},\lambda_{e_i})$. Thus we have $1\le \lambda_{e_i}\le 1$. In this case, the vertex $v$ is isolated in $\tilde{G}$.

        Second, suppose that either a positive slope is weakly $\le$-detected, or at least one $n_i^+$ is nonzero. Then there exists a negative integer $n'$ such that \[(\mu_{e_1}^{\varepsilon_1} \mu_{e_{2}}^{\varepsilon_{2}} \cdots \mu_{e_{k}}^{\varepsilon_{k}})^{-1}\mu_{v,e_1}^{n'}\ge 1.\]
        Thus we have \begin{align*}
             & \mu_{v,e_i}^{n'+n_i^+-\sum_{j=1}^k n_j^+}\\
             =& \mu_{e_i}^{\varepsilon_i}(\mu_{v,e_i}^{-n^+_i}\mu_{e_i}^{\varepsilon_i})^{-1}\mu_{v,e_i}^{n'-\sum_{j=1}^k n_j^+}\\
             =& \mu_{e_i}^{\varepsilon_i}(\mu_{v,e_i}^{-n^+_i}\mu_{e_i}^{\varepsilon_i})^{-1}\cdots (\mu_{v,e_1}^{-n^+_1}\mu_{e_1}^{\varepsilon_1})^{-1} \mu_{v,e_1}^{n'-\sum_{j=1}^k n_j^+}(\mu_{v,e_{1}}^{-n^+_{1}}\mu_{e_{1}}^{\varepsilon_{1}}) \cdots (\mu_{v,e_{i-1}}^{-n^+_{i-1}}\mu_{e_{i-1}}^{\varepsilon_{i-1}})\\
            =&\mu_{e_i}^{\varepsilon_i}(\mu_{v,e_{i+1}}^{-n^+_{i+1}}\mu_{e_{i+1}}^{\varepsilon_{i+1}}) \cdots (\mu_{v,e_{k}}^{-n^+_{k}}\mu_{e_{k}}^{\varepsilon_{k}})(\mu_{e_1}^{\varepsilon_1} \mu_{e_{2}}^{\varepsilon_{2}} \cdots \mu_{e_{k}}^{\varepsilon_{k}})^{-1}\mu_{v,e_1}^{n'}(\mu_{v,e_{1}}^{-n^+_{1}}\mu_{e_{1}}^{\varepsilon_{1}}) \cdots (\mu_{v,e_{i-1}}^{-n^+_{i-1}}\mu_{e_{i-1}}^{\varepsilon_{i-1}})\\
            \ge& \mu_{e_i}^{\varepsilon_i}.
        \end{align*}
        Then \textbf{S2} holds with $n_i^- =n'+n_i^+-\sum_{j=1}^k n_j^+$. By $\mu_{v,e_i}^{n_i^+}\le \mu_{e_i}^{\varepsilon_i}\le \mu_{v,e_i}^{n_i^-}$, we have $\mu_{v,e_i}\le 1$. Therefore \textbf{S3} also holds in this case. 
    \end{proof}

    Because $\tilde{G}$ is a directed acyclic graph by Lemma~\ref{lem:DAG}, we can list the vertices according to a topological ordering of $\tilde{G}$. By induction over the topological ordering of $\tilde{G}$, we can prove that \textbf{S1}, \textbf{S2} and \textbf{S3} hold for every non-isolated vertex in $\tilde{G}$ and every vertex in $V_+$ by Lemma~\ref{lem:2,3->1} and Lemma~\ref{lem:1->2,3}.

    Now we reverse the inequality signs. While every edge in $\tilde{G}$ is reversed, the set of isolated vertices remains invariant. Since a slope is weakly (resp., strongly) $\le$-detected if and only if it is weakly (resp., strongly) $\ge$-detected, we can check that $\ge$ satisfies the conditions in Theorem~\ref{thm:3.3-main} with respect to the same meridian-longitude pair of $\partial D(v_o)$. Therefore \textbf{S1}, \textbf{S2} and \textbf{S3} with reversed inequality signs also hold for every non-isolated vertex in $\tilde{G}$ and every vertex in $V_+$. 
    
    It follows that, for a non-isolated vertex $v$ in $\tilde{G}$ and a vertex $v$ in $V_+$, and any edge $e$ adjacent to $v$, the elements $\mu_{v,e}$ and $\mu_{e}$ are in the residue group of $\le$. By condition (a), a negative slope is weakly $\le$-detected with respect to the meridian-longitude pair $(\mu_e, \lambda_e)$. Thus $\lambda_e$ is also in the residue group of $\le$. 
    
    Let $V_1$ denote the set of vertices $v\in V$ such that the elements $\mu_{v,e}$, $\mu_{e}$, and $\lambda_e$ are in the residue group of $\le$ for every edge $e$ adjacent to $v$, then $V_+\subseteq V_1$. We prove that there does not exist an edge $e$ in $G$ connecting a vertex $v_1\in V_1$ and a vertex $v_2\in V\setminus V_1$ by contradiction.

    Suppose that there exists an edge $e$ in $G$ connecting $v_1\in V_1$ and $v_2\in V\setminus V_1$. By $v_1 \in V_1$, the elements $\mu_{v_1,e}$ $\mu_e$ and $\lambda_e$ are in the residue group of $\le$. By the group relation
    \[ \mu_{v_2,e}\lambda_e =\mu_e^{-1} \mu_{v_1,e} \mu_e\] or \[\mu_{v_1,e}\lambda_e^{-1} =\mu_e \mu_{v_2,e} \mu_e^{-1},\]
    the element $\mu_{v_2,e}$ is also in the residue group of $\le$. By $v_2\in V\setminus V_1$, the vertex $v_2$ is isolated in $\tilde{G}$. So $\lambda_{e_i}$ is in the residue group of $\le$ for every edge $e_i$ $(i=1,2,\ldots,k)$ adjacent to $v_2$. By condition (a), a negative slope is weakly $\le$-detected with respect to the meridian-longitude pair $(\mu_{e_i}, \lambda_{e_i})$. Thus $\mu_{e_i}$ is also in the residue group of $\le$. By the group relations (see Section~\ref{sec:3})  \[\mu_{v_2,e_i}=\mu_{e_i}^{\varepsilon_i}\mu_{v_2,e_{i+1}}\mu_{e_i}^{-\varepsilon_i}\]
    for $i=1,2,\ldots,k$, every $\mu_{v_2,e_i}$ is also in the residue group of $\le$. Then by definition, we have $v_2\in V_1$, which leads to a contradiction.

    Because every connected component of $G$ contains at least one vertex in $V_+$ and $V_+\subseteq V_1$, we have $V_1 = V$. Because the elements $\mu_{v,e}$, $\mu_{e}$, and $\lambda_e$ generate the group $\pi_1(S^3\setminus L)$, the left total preorder $\le$ cannot be proper. Therefore Theorem~\ref{thm:3.3-main} holds true.
\bibliographystyle{alpha}
\bibliography{ref}

\begin{thebibliography}{HRRW20}

\bibitem[Ago23]{agol2023chainmail}
Ian Agol.
\newblock Chainmail links and {L}-spaces.
\newblock {\em arXiv preprint arXiv:2306.10918}, 2023.

\bibitem[AS23a]{abchir2023infinite}
Hamid Abchir and Mohammed Sabak.
\newblock Infinite families of non-left-orderable {$L$}-spaces.
\newblock {\em Osaka Journal of Mathematics}, 60(1):77--103, 2023.

\bibitem[AS23b]{abchir2023left}
Hamid Abchir and Mohammed Sabak.
\newblock On left-orderability of involutory quandles of links.
\newblock {\em arXiv preprint arXiv:2310.05735}, 2023.

\bibitem[Ba19]{ba2019spaces}
Idrissa Ba.
\newblock L-spaces, left-orderability and two-bridge knots.
\newblock {\em Journal of Knot Theory and Its Ramifications}, 28(03):1950019,
  2019.

\bibitem[BC17]{boyer2017foliations}
Steven Boyer and Adam Clay.
\newblock Foliations, orders, representations, {L-spaces} and graph manifolds.
\newblock {\em Advances in Mathematics}, 310:159--234, 2017.

\bibitem[BC22]{boyer2022order}
Steven Boyer and Adam Clay.
\newblock Order-detection of slopes on the boundaries of knot manifolds.
\newblock {\em arXiv preprint arXiv:2206.00848}, 2022.

\bibitem[BGH21]{boyer2021slope}
Steven Boyer, Cameron~McA Gordon, and Ying Hu.
\newblock Slope detection and toroidal $3$-manifolds.
\newblock {\em arXiv preprint arXiv:2106.14378}, 2021.

\bibitem[BGW13]{boyer2013spaces}
Steven Boyer, Cameron~McA Gordon, and Liam Watson.
\newblock On {L}-spaces and left-orderable fundamental groups.
\newblock {\em Mathematische Annalen}, 356(4):1213--1245, 2013.

\bibitem[DPT05]{dkabkowski2005non}
Mieczys{\l}aw~K D{\k{a}}bkowski, J{\'o}zef~H Przytycki, and Amir~A Togha.
\newblock Non-left-orderable $3$-manifold groups.
\newblock {\em Canadian Mathematical Bulletin}, 48(1):32--40, 2005.

\bibitem[Eft18]{eftekhary2018bordered}
Eaman Eftekhary.
\newblock Bordered {Floer} homology and existence of incompressible tori in
  homology spheres.
\newblock {\em Compositio Mathematica}, 154(6):1222--1268, 2018.

\bibitem[GL16]{greene2016strong}
Joshua Greene and Adam Levine.
\newblock Strong {Heegaard} diagrams and strong {L}--spaces.
\newblock {\em Algebraic \& Geometric Topology}, 16(6):3167--3208, 2016.

\bibitem[Gre13]{greene2013spanning}
Joshua~Evan Greene.
\newblock A spanning tree model for the {Heegaard Floer} homology of a branched
  double-cover.
\newblock {\em Journal of Topology}, 6(2):525--567, 2013.

\bibitem[Gre18]{greene2018alternating}
Joshua Greene.
\newblock Alternating links and left-orderability.
\newblock {\em Proceedings of the American Mathematical Society},
  146(6):2707--2709, 2018.

\bibitem[HL16]{hedden2016splicing}
Matthew Hedden and Adam~Simon Levine.
\newblock Splicing knot complements and bordered {Floer} homology.
\newblock {\em Journal f{\"u}r die reine und angewandte Mathematik (Crelles
  Journal)}, 2016(720):129--154, 2016.

\bibitem[HRRW20]{hanselman2020spaces}
Jonathan Hanselman, Jacob Rasmussen, Sarah~Dean Rasmussen, and Liam Watson.
\newblock L-spaces, taut foliations, and graph manifolds.
\newblock {\em Compositio Mathematica}, 156(3):604--612, 2020.

\bibitem[HRW16]{hanselman2016bordered}
Jonathan Hanselman, Jacob Rasmussen, and Liam Watson.
\newblock Bordered floer homology for manifolds with torus boundary via
  immersed curves.
\newblock {\em arXiv preprint arXiv:1604.03466}, 2016.

\bibitem[Ito13]{ito2013non}
Tetsuya Ito.
\newblock Non-left-orderable double branched coverings.
\newblock {\em Algebraic \& Geometric Topology}, 13(4):1937--1965, 2013.

\bibitem[Li22]{li2022taut}
Tao Li.
\newblock Taut foliations of $3$-manifolds with {Heegaard} genus two.
\newblock {\em arXiv preprint arXiv:2202.00737}, 2022.

\bibitem[LL13]{levine2013strong}
Adam~Simon Levine and Sam Lewallen.
\newblock Strong {L}-spaces and left-orderability.
\newblock {\em Mathematical Research Letters}, 19(6):1237--1244, 2013.

\bibitem[LN22]{lin2022non}
Lin Li and Zipei Nie.
\newblock Non-left-orderability of cyclic branched covers of pretzel knots
  $p(3,-3,-2k-1)$.
\newblock {\em Proceedings of the Japan Academy, Series A: Mathematical
  Sciences}, 98(10), 2022.

\bibitem[LNV23]{lisitsa2023automated}
Alexei Lisitsa, Zipei Nie, and Alexei Vernitski.
\newblock Automated reasoning for proving non-orderability of groups.
\newblock {\em arXiv preprint arXiv:2310.05891}, 2023.

\bibitem[MP94]{matveev1994geometrical}
S~Matveev and M~Polyak.
\newblock A geometrical presentation of the surface mapping class group and
  surgery.
\newblock {\em Communications in mathematical physics}, 160:537--550, 1994.

\bibitem[Nie21]{nie2021explicit}
Zipei Nie.
\newblock An explicit description of $(1, 1)$ {L}-space knots, and
  non-left-orderable surgeries.
\newblock {\em Communications of Huawei Research}, 1:212--219, 2021.

\bibitem[OS04a]{ozsvath2004heegaard}
Peter Ozsv{\'a}th and Zolt{\'a}n Szab{\'o}.
\newblock Heegaard diagrams and holomorphic disks.
\newblock In {\em Different faces of geometry}, pages 301--348. Springer, 2004.

\bibitem[OS04b]{ozsvath2004lectures}
Peter Ozsv{\'a}th and Zolt{\'a}n Szab{\'o}.
\newblock Lectures on {Heegaard Floer homology}.
\newblock {\em Floer homology, gauge theory, and low-dimensional topology},
  5:29--70, 2004.

\bibitem[Pol14]{polyak20143}
M~Polyak.
\newblock From $3$-manifolds to planar graphs and cycle-rooted trees, talk at
  {Arnold}'s legacy conference.
\newblock {\em Fields Institute, Toronto}, 2014.

\bibitem[Ras17]{rasmussen2017space}
Sarah~Dean Rasmussen.
\newblock L-space intervals for graph manifolds and cables.
\newblock {\em Compositio Mathematica}, 153(5):1008--1049, 2017.

\end{thebibliography}
\end{document}